\newcommand{\bla}{weak RL} 
\newcommand{\Bla}{Weak RL}
\newbox\mybox
\def\overtag#1#2#3{\setbox\mybox\hbox{$#1$}\hbox to
  0pt{\vbox to 0pt{\vglue-#3\vglue-\ht\mybox\hbox to \wd\mybox
      {\hss$\ss#2$\hss}\vss}\hss}\box\mybox}
\def\undertag#1#2#3{\setbox\mybox\hbox{$#1$}\hbox to 0pt{\vbox to
    0pt{\vglue#3\vglue\ht\mybox\hbox to \wd\mybox
      {\hss$\ss#2$\hss}\vss}\hss}\box\mybox}
\def\lefttag#1#2#3{\hbox to 0pt{\vbox to 0pt{\vglue -6pt\hbox to
      0pt{\hss$\ss#2$\hskip#3}\vss}}#1}
\def\righttag#1#2#3{\hbox to 0pt{\vbox to 0pt{\vglue -6pt\hbox to
      0pt{\hskip#3$\ss#2$\hss}\vss}}#1}
\let\ss\scriptstyle
\def\splicediag#1#2{\xymatrix@R=#1pt@C=#2pt@M=0pt@W=0pt@H=0pt}
\def\Dot{\lower.2pc\hbox to 2pt{\hss$\bullet$\hss}}
\def\Circ{\lower.2pc\hbox to 2pt{\hss$\circ$\hss}}
\def\Vdots{\raise5pt\hbox{$\vdots$}}
\newcommand\lineto{\ar@{-}}
\newcommand\dashto{\ar@{--}}
\newcommand\dotto{\ar@{.}}
\let\cal\mathcal
\renewcommand{\setminus}{\smallsetminus}
\newcommand\C{{\mathbb C}}
\renewcommand\L{{$\cal L$}}
\newtheorem{theorem}{Theorem}[section]
\newtheorem{proposition}[theorem]{Proposition}
\newtheorem*{theorem*}{Theorem}
\newtheorem*{proposition*}{Proposition}
\newtheorem{lemma}[theorem]{Lemma}
\theoremstyle{definition}
\newtheorem{example}[theorem]{Example}
\newtheorem*{remark*}{Remark}
\newtheorem{definition}[theorem]{Definition}
\newcommand{\proj}{\operatorname{proj}}
\renewcommand{\int}{\operatorname{int}}
\begin{document}

\title[Lipschitz geometry does not determine embedded topological type]
{Lipschitz geometry does not determine embedded topological type} 
\author{Walter D Neumann} \address{Department of Mathematics,
  Barnard College, Columbia University, 2009 Broadway MC4424, New
  York, NY 10027, USA} \email{neumann@math.columbia.edu}

\author{Anne Pichon} \address{Aix Marseille Universit\'e, CNRS,
  Centrale Marseille, I2M, UMR 7373, 13453 Marseille, FRANCE}
\email{anne.pichon@univ-amu.fr} \subjclass{14B05, 32S25, 32S05, 57M99}
\keywords{complex surface
  singularity, bilipschitz, Lipschitz geometry, embedded topological type, superisolated}

\dedicatory{Dedicated to Jos\'e Seade for a great  occasion. Happy birthday, Pepe!}
\begin{abstract} 
 We investigate the relationships between the Lipschitz outer geometry and the embedded topological type of a hypersurface germ in $(\C^n,0)$.
  It is well known that the Lipschitz outer geometry of a complex
  plane curve germ determines and is determined by its embedded
  topological type.  We
  prove that this does not remain true in higher dimensions. Namely, we
  give two normal hypersurface germs $(X_1,0)$ and $(X_2,0)$ in
  $(\C^3,0)$ having the same outer Lipschitz geometry and different
  embedded topological types. Our pair consist of two superisolated
  singularities whose tangent cones form an Alexander-Zariski pair
  having only cusp-singularities.  Our result is based on a    description of the Lipschitz outer geometry of a superisolated singularity. We
  also prove that the Lipschitz inner geometry of a superisolated
  singularity is completely determined by  its  (non embedded) topological type, or equivalently by  the combinatorial type of
  its tangent cone. \end{abstract}

\maketitle

\section{Introduction}

A complex germ $(X,0)$ has two natural metrics up to bilipschitz
equivalence, the  \emph{outer metric} given by embedding $(X,0)$
in some $(\C^n,0)$ and taking distance in $\C^n$ and the \emph{inner
metric} given by shortest distance along paths in $X$.

In this paper we investigate the relationships between the Lipschitz outer geometry and the embedded topological type of a hypersurface germ in $(\C^n,0)$.

  It is well known that the Lipschitz outer geometry of a complex
  plane curve germ determines and is determined by its embedded
  topological type (\cite{PT}, see also \cite{fernandes} and \cite[Theorem 1.1.]{NP}).  We
  prove that this does not remain true in higher dimensions:
\begin{theorem}\label{th:main}
There exist two  hypersurface germs in $(\C^3,0)$ having same
Lipschitz outer geometry and distinct embedded topological type.
\end{theorem}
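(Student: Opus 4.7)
The strategy is to build the desired pair from an Alexander--Zariski pair of projective plane curves. Recall that a superisolated hypersurface germ $(X,0) \subset (\C^3,0)$ is defined by $f = f_d + f_{d+1} + \cdots = 0$ where the projective tangent cone $C := \{f_d = 0\} \subset \mathbb{P}^2$ is reduced with only isolated singularities, all avoided by $\{f_{d+1}=0\}$. I would choose two projective curves $C_1,C_2 \subset \mathbb{P}^2$ of a common degree $d$ with identical combinatorial type --- only cuspidal singularities, arranged in the same combinatorial pattern --- but distinct embedded topological types in $\mathbb{P}^2$ (an Alexander--Zariski pair, for instance Zariski's classical pair of irreducible sextics with six cusps, distinguished by their Alexander polynomials), and set $X_i := \{f_i + \ell^{d+1} = 0\}$ for defining equations $f_i$ of $C_i$ and a generic linear form $\ell$.

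That the embedded topological types of $(X_1,0)$ and $(X_2,0)$ differ would follow by invoking the classical result of Luengo, refined by Artal--Bartolo, according to which the embedded topological type of a superisolated surface singularity recovers the embedded topological type of its projectivized tangent cone in $\mathbb{P}^2$. Since the chosen $C_i$ are distinguished at that level, so are the $(X_i,0)$.

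The substantive content is the coincidence of the outer Lipschitz geometries, and this is the main obstacle. The plan is to give a description of the outer Lipschitz geometry of a superisolated surface singularity as an assembly of standard pieces indexed only by combinatorial data of $C$ together with local analytic data of its singularities. Concretely, I would produce a \carrousel-type decomposition of a small Milnor ball $B\cap X$ into two kinds of pieces: those sitting over a neighborhood of the smooth part of $C$ in $\mathbb{P}^2$, whose outer geometry is controlled by the degree $d$ and the local branching behavior of the projection; and those sitting over a small ball in $\mathbb{P}^2$ around each cusp $p\in C$, whose outer geometry depends only on the analytic type of $(C,p)$. Because $C_1$ and $C_2$ share all of this combinatorial and local analytic data, gluing the pieces along their common incidence pattern produces an outer bilipschitz homeomorphism $(X_1,0) \to (X_2,0)$.

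The delicate point is that the identification must be \emph{outer} bilipschitz rather than merely inner, so the gluing must be compatible with how each piece sits in the ambient $\C^3$. The key rigidity needed --- and where most of the technical work lies --- is that the ambient outer Lipschitz geometry near each singular point of the tangent cone is intrinsically controlled by the local analytic type of $(C,p)$, with essentially no outer-Lipschitz cross-talk between distinct cusps at the Milnor-ball scale. Once this local rigidity is established, combining it with the genericity of $\ell$ yields an outer bilipschitz model of $(X,0)$ depending only on shared combinatorial data of the tangent cone, and the theorem follows.
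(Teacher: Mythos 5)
Your construction and overall route coincide with the paper's: superisolated germs whose tangent cones form Zariski's pair of six-cuspidal sextics, embedded topology distinguished via the monodromy, and outer Lipschitz equivalence deduced from the claim that the outer geometry of a superisolated germ is determined by the combinatorial type of its projectivized tangent cone together with the local analytic types of its singularities. Two points, however, are genuine gaps as written. First, the distinguishing step invokes a statement (``the embedded topological type of a SIS recovers the embedded topological type of $(\mathbb P^2,C)$'') that is stronger than what is available in the literature and is not what is needed; the correct tool is Artal's theorem that a Zariski pair is distinguished by its Alexander polynomials if and only if the associated superisolated singularities are distinguished by the Jordan block decomposition of their homological monodromies. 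This is exactly why the pair must be an Alexander--Zariski pair, as you chose, but the argument should be run through the monodromy, not through a general recovery statement that would itself require proof.

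Second, the outer-Lipschitz half is, as you acknowledge, where the substance lies, and your text only states the program; the missing content is precisely Lemma \ref{le:RL implies BLA} and Proposition \ref{thm:outer} of the paper. Concretely: (i) one must make precise in what sense the outer geometry near the exceptional line over a singular point $p\in C$ depends only on the analytic type of $(C,p)$. In the blow-up chart, $(X^*,p)$ is the graph $x=h(v,w)$ with $h(v,w)=f(1,v,w)/g(1,v,w)$, so the relevant comparison is between the parametrizations $(v,w)\mapsto h(v,w)(1,v,w)$ of the two germs; one must show that this comparability (the paper's weak RL equivalence) is independent of the chosen defining equation (multiplication by a unit is a bilipschitz change) and is implied by analytic equivalence of the curve germs, and for cusps one also needs Zariski's theorem that the analytic type of a cusp is unique --- your phrase ``share all local analytic data'' uses this silently. (ii) One must prove the inner-versus-outer equivalence on the conical part; the paper adapts the test-curve argument of \cite{NPP}, using that the tangent cone is reduced. (iii) The gluing must be carried out: the locally defined bilipschitz maps near the cusps are extended over the conical region by a $D^2$-bundle isomorphism, which exists because the relative Chern classes match (they equal minus the degrees of the components of $C_i$, equal by combinatorial equivalence), and point pairs lying in different cusp regions, or one in a cusp region and one in the conical part, are handled by the bounded ratio of inner to outer distance for such pairs together with the inner equivalence. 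Your ``no outer-Lipschitz cross-talk between distinct cusps'' is the right intuition for this last step, but (i) and (iii) in particular require proof and constitute the actual content of the paper's argument.
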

It is worth noting that for families of isolated hypersurfaces in
$\C^3$, the constancy of Lipschitz outer geometry implies constancy of embedded
topological type. Indeed, Varchenko proved in \cite{varchenko} that a
Zariski equisingular family of hypersurfaces in any dimension has
constant embedded topological type and it is proved in \cite{NP2} that
for a family of hypersurface singularities $(X_t,0)\subset(\C^3,0)$,
Zariski equisingularity is equivalent to constant Lipschitz outer
geometry. 

It should also be noted that the converse question, which consists of
examining which part of the outer Lipschitz geometry of a
hypersurface can be recovered from its embedded topological type seems
difficult. In particular the outer geometry of a normal complex
surface singularity determines its multiplicity (\cite[Theorem 1.2 (2)]{NP2}) so this
question somehow contains the Zariski multiplicity question.

In order to prove Theorem \ref{th:main} we construct two germs of
hypersurfaces in $(\C^3,0)$ having the same Lipschitz  outer geometry
and different embedded topological types. They consist of a pair of
superisolated singularities whose tangent cones form an
Alexander-Zariski pair of projective plane curves.

 A surface singularity $(X,0)$ is {\it superisolated} (SIS for short)
 if it is given by an equation 
$$ f_{d} (x,y,z)+f_{d+1} (x,y,z)+f_{d+2}(x,y,z)+\dots=0, $$
where $d \geq 2$, $f_k$ is a homogeneous polynomial of degree $k$
and the projective curve $\{f_{d+1}=0 \} \subset \mathbb P^2$ 
contains no singular point of the projective curve $C= \{[x:y:z]:
f_{d} (x,y,z)=0\}$.  In particular, the projectivized tangent cone $C$
of $(X,0)$ is reduced.  In the sequel we will just consider
SISs with equations 
$$ f_{d} (x,y,z)+f_{d+1} (x,y,z)=0\,. $$

\begin{definition}[Combinatorial type of a projective plane
  curve]\label{def:combinatorial type}
The \emph{combinatorial type} of a reduced projective plane curve  $C \subset \mathbb P^2$ is the homeomorphism type of a tubular neighborhood of it in $\mathbb
P^2$ (see, e.g., \cite[Remark 3]{ACT}; a more combinatorial version is also given there, which we describe in Section \ref{sec:superisolated}).
\end{definition}

It is well known  that the combinatorial type
of the projectivized tangent cone of a SIS $(X,0)$ determines the  topology of $(X,0)$. In fact, we will show:
 \begin{theorem} \label{thm:inner} (i). The Lipschitz inner geometry of a
  SIS determines and is determined by  the combinatorial type of its projectivized tangent cone.

  (ii).  There exist SISs with the same combinatorial types of their projectivized tangent cones but different Lipschitz outer geometry.
\end{theorem}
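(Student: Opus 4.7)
The plan is to treat the two parts separately. For part (i), I would proceed in two directions. For the implication \emph{combinatorial type of $C$ determines the inner geometry}, recall that the dual graph of the minimal good resolution of a SIS is read off from the embedded resolution of $C\subset\mathbb P^2$: one blowup of the origin of $\C^3$ produces a smooth threefold in which the strict transform meets the exceptional $\mathbb P^2$ precisely along $C$, and the remaining singularities of the strict transform are analytically cones on the plane curve singularities of $C$, so that resolving them is controlled by the embedded topology of the singular points of $C$. Hence the weighted dual graph of the minimal good resolution depends only on the combinatorial type of $C$. By the Birbrair--Neumann--Pichon thick-thin decomposition, the inner Lipschitz geometry of a normal complex surface singularity is a bilipschitz invariant of the decorated resolution graph together with its Seifert data, which gives this direction. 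For the converse, the inner Lipschitz geometry determines the topological type of $(X,0)$, and for a SIS the topological type of $(X,0)$ is known to determine the combinatorial type of the tangent cone.

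For part (ii), the plan is to exhibit an explicit family of SISs with constant combinatorial type but varying outer Lipschitz geometry. A natural candidate is a continuous family $(C_t)$ of tangent cones sharing one combinatorial type but moving in moduli---for example an arrangement of $d\geq 4$ lines with varying cross ratio, or a family in which the analytic type at a singular point of $C_t$ varies within a fixed topological type. Since the combinatorial type is constant, part (i) already guarantees that the inner geometry is constant; the outer geometry, however, should detect the continuous moduli through the metric configuration of singular points of $C_t$ in the link. Using the description of the Lipschitz outer geometry of a SIS promised in the abstract, one extracts a bilipschitz invariant of the outer geometry (for instance an inner rate along a polar curve, or a normalised distance between components of the polar image in the link) whose value varies nontrivially in $t$; for generic pairs $t_1\neq t_2$ these values disagree, producing non-bilipschitz-equivalent outer geometries.

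The main obstacle lies in part (ii): constructing an explicit outer bilipschitz invariant that detects the moduli parameter requires the SIS-specific outer-geometry description developed in the paper, and one must verify that this invariant is genuinely continuous and non-constant on the family rather than merely a topological one. Part (i) is more structural: the main technical content is to identify which Seifert and graph data on the minimal good resolution are determined purely by the combinatorial type of $C$, and to match these with the BNP data that govern the inner geometry up to bilipschitz equivalence.
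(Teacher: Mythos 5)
There is a genuine gap in both parts. In part (i), your forward direction rests on the claim that, by \cite{BNP}, the inner Lipschitz geometry is an invariant of ``the decorated resolution graph together with its Seifert data''. This misquotes what \cite{BNP} provides: the classifying object there is the resolution graph decorated with the \emph{inner rates} of the geometric pieces, and these rates are a priori analytic data (they come from a carrousel decomposition of the discriminant of a generic plane projection), not part of the weighted graph or Seifert data. Whether the topology determines the inner geometry is exactly the nontrivial point; if your citation were correct, part (i) would be immediate from the well-known fact that the combinatorial type of the tangent cone determines the topology of the SIS. The actual content of the paper's proof is to show that for a SIS the inner rates \emph{are} computable from $\Gamma$: after the single blow-up resolving the SIS one writes $X^*$ locally as $x=h(v,w)$ with $h=f(1,v,w)/g(1,v,w)$, identifies the lifted decomposition with a carrousel for the relative polar of $h$, uses the L\^e Swing Lemma and \cite[Th\'eor\`eme C]{LMW} to express each rate as a polar quotient $m_{E_i}(l)/m_{E_i}(h)+1$, and observes that $m_{E_i}(h)=m_{E_i}(\tilde f)$ since $g$ is a unit at the singular points of $C$, so everything is read off $\Gamma$. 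None of this appears in your sketch. (Also, a small but real error: after one blow-up the strict transform of a SIS is already \emph{smooth} -- it is a graph $x=h(v,w)$ -- so there are no ``cone'' singularities of $X^*$ to resolve; what one resolves are the non--normal-crossing points of the exceptional curve $C$ inside the smooth $X^*$.)

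In part (ii) you give a plan, not a proof, and your first proposed family is doomed: for an arrangement of lines with varying cross-ratio the singular points of the tangent cone are ordinary nodes, which are analytically rigid, so by the paper's Proposition \ref{thm:outer} all members of such a family have the \emph{same} outer Lipschitz geometry -- the cross-ratio is invisible to it. Your second suggestion (varying the analytic type within a fixed topological type) is closer in spirit, but you never produce an example nor a bilipschitz invariant that separates members, and you concede that this is the ``main obstacle''. The paper's route is concrete: it takes the two SISs $(y^3-z^2x)(y^3+z^2x)+(x+y+z)^7=0$ and $(y^3-z^2x)(y^3+2z^2x)+(x+y+z)^7=0$, whose tangent cones (two cuspidal cubics meeting at the cusps with maximal contact and at one further point) have the same combinatorial type, computes multiplicities along the exceptional divisor and resolves the family of polar curves $aF_x+bF_y+cF_z=0$, finds that the generic polar curves have different numbers of branches, and concludes via \cite[Theorem 1.2 (6)]{NP2} that the outer geometries differ. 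Without such an explicit computation (or an equally explicit invariant), part (ii) is not established by your proposal.
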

\smallskip\noindent{\bf Acknowledgments.} We are grateful to
H\'el\`ene Maugendre for fruitful conversations and for communicating
to us the equations of the tangent cones for the examples in
the proof of Theorem \ref{thm:inner} (ii). Walter Neumann was supported by NSF grant
DMS-1206760. Anne Pichon was supported by ANR-12-JS01-0002-01 SUSI. We are also grateful for the
hospitality and support of the following institutions: Columbia University, Institut de Math\'ematiques de Marseille, Aix Marseille
Universit\'e and CIRM Luminy, Marseille.

\section{Proof of Theorem 1.1}

The proof of Theorem \ref{th:main} will need Lemma \ref{le:RL implies BLA} and Proposition \ref{thm:outer} below, which will be proved in section \ref{sec:outer geometry}. First a definition:

\begin{definition} We say that two germs $(C_1,0)$ and $(C_2,0)$ of
  reduced irreducible plane curves are
  \emph{\bla-equivalent} if for $i=1,2$ there are holomorphic maps $h_i\colon (\C^2,0)\to (\C,0)$ with $(h_i^{-1}(0),0)=(C_i,0)$,  a homeomorphism $\psi \colon
  (\C^2,0) \to (\C^2,0)$, a constant $K \geq 1$ and a neighborhood
  $\cal U$
  of the origin in $\C^2$ such that for all $a,a' \in \cal U$.
\begin{align*}
\frac{1}{K} ||  h_2(\psi(a)) (1,\psi(a)) -  h_2(\psi(a'))&(1,\psi(a')) ||_{\C^3} \leq   || h_1(a)(1,a) - h_1(a')(1,a') ||_{\C^3} \hskip1,5cm
\\ & \leq  K  || h_2(\psi(a)) (1,\psi(a)) -  h_2(\psi(a')) (1,\psi(a')) ||_{\C^3}
\end{align*}
\end{definition} 
\begin{lemma}\label{le:RL implies BLA}
 \Bla-equivalence of reduced irreducible plane curve 
germs $(C_1,0)$ and $(C_2,0)$ does 
 not depend on the choice of their  defining functions $h_1$ and $h_2$. Moreover, it is implied by
 analytic equivalence of  $(C_1,0)$ and $(C_2,0)$ in the sense of Zariski \cite{Z1}
 (also called RL-equivalence or $\cal A$-equivalence).
\end{lemma}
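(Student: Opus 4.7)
The whole proof reduces to controlling how the map $\Phi_i(a) := h_i(a)(1,a) \in \C^3$ behaves when $h_i$ is replaced by an equivalent defining function. The principal difficulty is that $\Phi_i$ is not locally bilipschitz on $\C^2$ — it crushes the entire curve $C_i$ to the origin — so every estimate has to be uniform near $C_i$, where $h_i(a)\to 0$. My plan rests on a single a priori inequality: for all $a,a'$ in a sufficiently small neighborhood of $0$,
$$ ||a-a'|| \cdot |h_1(a')| \;\lesssim\; ||\Phi_1(a) - \Phi_1(a')||. \qquad (*) $$
I would prove $(*)$ by casework on the decomposition
$$ ||\Phi_1(a)-\Phi_1(a')||^2 \;=\; |h_1(a)-h_1(a')|^2 + ||\,h_1(a)(a-a') + (h_1(a)-h_1(a'))a'\,||^2. $$
If $|h_1(a)-h_1(a')| \ge |h_1(a')|/2$ the first summand already dominates $||a-a'||\cdot|h_1(a')|$ (because $||a-a'||$ is bounded in the neighborhood). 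Otherwise $|h_1(a)|\asymp|h_1(a')|$, and a final split on whether $|h_1(a)|\,||a-a'||$ exceeds $|h_1(a)-h_1(a')|\,||a'||$ shows one of the two summands controls the target.

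With $(*)$ in hand, Part 1 is almost formal. Any two defining functions of $C_i$ differ by a holomorphic unit, so it suffices to check that replacing $h_1$ by $u h_1$ with $u(0)\ne 0$ produces an equivalent $\Phi'_1$. Writing
$$ \Phi'_1(a) - \Phi'_1(a') \;=\; u(a)\bigl(\Phi_1(a) - \Phi_1(a')\bigr) + \bigl(u(a) - u(a')\bigr)\,\Phi_1(a'), $$
one uses $|u|\asymp 1$, the Lipschitz bound $|u(a)-u(a')|\lesssim ||a-a'||$, and $||\Phi_1(a')||\asymp|h_1(a')|$; estimate $(*)$ then absorbs the error term into a constant multiple of $||\Phi_1(a)-\Phi_1(a')||$. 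The reverse inequality comes from applying the same argument to $u^{-1}$, which is also a unit. Doing this for both $h_1$ and $h_2$, with the homeomorphism $\psi$ from the original weak-RL data kept fixed, multiplies the original Lipschitz constant by the two local constants.

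For Part 2, let $\varphi\colon(\C^2,0)\to(\C^2,0)$ be the biholomorphism with $\varphi(C_1)=C_2$ given by analytic equivalence, and use Part 1 to choose $h_1 := h_2\circ\varphi$ and $\psi := \varphi$. Then the first coordinates of $\Phi_1(a)$ and $\Phi_2(\psi(a))$ automatically coincide; only the last two require control. Taylor expanding $\varphi(a) = \varphi'(0)\,a + R(a)$ with $||R(a)|| = O(||a||^2)$ yields
$$ h_2(\varphi(a))\,\varphi(a) - h_2(\varphi(a'))\,\varphi(a') \;=\; \varphi'(0)\bigl[h_1(a)\,a - h_1(a')\,a'\bigr] + E, $$
with $||E|| \lesssim |h_1(a)|\,(||a||+||a'||)\,||a-a'|| + |h_1(a)-h_1(a')|\,||a'||^2$. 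Since $\varphi'(0)$ is a fixed linear isomorphism it is bilipschitz, and both error terms are dominated by an arbitrarily small multiple of $||\Phi_1(a)-\Phi_1(a')||$ in a sufficiently small neighborhood: the first summand by $(*)$ applied to $h_1$ (combined with $|h_1(a)|\le |h_1(a')|+|h_1(a)-h_1(a')|$), the second by the tautological bound $|h_1(a)-h_1(a')|\le ||\Phi_1(a)-\Phi_1(a')||$. This gives the required bilipschitz inequality in one direction; the other direction follows by running the same expansion for $\varphi^{-1}$. The only genuinely delicate step in the whole argument is the casework for $(*)$; once that is secured, everything else is bookkeeping.
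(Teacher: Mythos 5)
Your proposal is correct, and its skeleton matches the paper's: first reduce independence of the defining functions to multiplication by a unit, then, for analytic equivalence, use that independence to replace $h_1$ by $h_2\circ\varphi$ and compare $\varphi$ with its linear part $\varphi'(0)$. The genuine difference lies in how the two bilipschitz assertions are justified. The paper argues geometrically on the image surfaces $S_i=H_i(\C^2)\subset\C^3$: multiplication by a unit induces the map $\eta(x,y,z)=U(\tfrac yx,\tfrac zx)(x,y,z)$, asserted to be bilipschitz because $U$ is close to a nonzero constant on a small neighborhood, and the case of a general biholomorphism is settled by saying it agrees with the linear case ``up to higher order''. Your a priori inequality $(*)$, namely $\|a-a'\|\,|h_1(a')|\lesssim\|\Phi_1(a)-\Phi_1(a')\|$, is exactly the estimate needed to make both of those steps fully rigorous: in each case the error term is controlled by a constant times $\|a-a'\|\,|h_1(a')|$ plus a small multiple of $|h_1(a)-h_1(a')|$, and $(*)$ together with the tautological bound $|h_1(a)-h_1(a')|\le\|\Phi_1(a)-\Phi_1(a')\|$ absorbs it. So your route buys explicit constants and a self-contained verification where the paper relies on the reader to supply the analytic details, while the paper's formulation in terms of the surfaces $S_i$ and the induced map $\psi'$ buys brevity and a cleaner geometric restatement of \bla-equivalence as outer bilipschitz equivalence of the germs $(S_1,0)$ and $(S_2,0)$. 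One cosmetic point: in the final split of your casework for $(*)$, use a factor-of-two threshold (as you already did in the first split); with an exact threshold the triangle inequality only yields positivity rather than a definite lower bound.
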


\begin{proposition} \label{thm:outer} Let $(X,0)$ be a SIS with
  equation $f_d + f_{d+1} =0$.  The Lipschitz outer geometry of
  $(X,0)$ is determined by the combinatorial type of its projectivized
  tangent cone and by the \bla-equivalence classes of corresponding
  singularities of the projectivized tangent cones.
  \end{proposition}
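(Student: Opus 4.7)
My plan is to decompose each SIS $(X,0)$ into a \emph{regular} piece lying over the smooth part of its tangent cone $C$ plus local pieces at each singular point of $C$, build bilipschitz models for these pieces from only the combinatorial and \bla-equivalence data, and assemble them. The two key steps are: (i) the local outer Lipschitz geometry near a singular point of $C$ is determined by the \bla-equivalence class of the corresponding plane curve germ; (ii) the regular piece and its gluing to the local pieces are determined by the combinatorial type of $C$.

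For (i), place a singular point $p$ of $C$ at $[0:0:1]$ and work in the affine chart $z\ne 0$ with $u=x/z$, $v=y/z$. Dividing the SIS equation $f_d+f_{d+1}=0$ by $z^d$ gives
\[
\phi(u,v)+z\,\psi(u,v)=0,
\]
where $\phi(u,v)=f_d(u,v,1)$ is a local equation of $(C,p)$ and $\psi(u,v)=f_{d+1}(u,v,1)$ satisfies $\psi(0)\ne 0$ by the SIS hypothesis. Solving for $z$ and setting $a=(u,v)$, one parametrizes $X$ in this chart by
\[
a\mapsto (uz,vz,z)=-\psi(a)^{-1}\phi(a)\,(u,v,1).
\]
Because $\psi$ is a nonvanishing unit, a straightforward first-order estimate shows that on a small enough neighborhood of $p$ this parametrization is bilipschitz equivalent (for the outer metric on $\C^3$) to the simpler map $a\mapsto -\psi(0)^{-1}\phi(a)(u,v,1)$. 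Up to a cyclic permutation of the coordinates of $\C^3$, this last map is exactly $a\mapsto h(a)(1,a)$ with $h=\phi$, i.e.\ the one appearing in the definition of \bla-equivalence. Hence a \bla-equivalence between the germs $(C,p_i)$ and $(C',p'_i)$ of the tangent cones of two SISs produces, via these parametrizations, a bilipschitz identification of small neighborhoods of $p_i$ in $X$ and of $p'_i$ in $X'$; by Lemma~\ref{le:RL implies BLA} this identification does not depend on the choice of defining functions.

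For (ii), over each arc of the smooth part of $C$ the surface $X$ is a smooth deformation of the ordinary cone on the arc and is outer-bilipschitz equivalent to a standard tubular cone neighborhood, whose outer bilipschitz type is read off from the embedding of the arc in $\mathbb{P}^2$. A combinatorial equivalence of the tangent cones in the sense of Definition~\ref{def:combinatorial type} therefore yields a bilipschitz identification of the regular pieces of $X$ and $X'$; it also prescribes which regular arc-neighborhood abuts which local model of (i) and through which collar, so the local identifications can be chosen compatibly with the regular one.

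The main obstacle is the gluing: one must check that the bilipschitz maps on the regular piece and on the local pieces can be patched on their shared interface collars. This will be handled by shrinking the local neighborhoods of the singular points enough that on each interface the surface $X$ is arbitrarily close in outer Lipschitz distance to the linear cone over an arc of $C$; the regular and local identifications then differ by a bilipschitz-small map on the collar and can be interpolated by a standard collaring argument to produce a single global outer bilipschitz homeomorphism.
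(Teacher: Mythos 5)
Your local step is in substance the paper's own: in a chart adapted to a singular point of the tangent cone the surface is parametrized (exactly, if you take $h=f_d(1,v,w)/f_{d+1}(1,v,w)$ as the paper does, rather than approximately after freezing the unit) by $a\mapsto h(a)(1,a)$, so the \bla-equivalence hypothesis, combined with Lemma~\ref{le:RL implies BLA}, hands you an outer bilipschitz identification of conical neighborhoods of corresponding exceptional lines. The genuine problem is the assembly, which you correctly flag as the main obstacle but then dismiss with an argument that does not work. The identification on a local piece is induced by the homeomorphism $\psi$ of $(\C^2,0)$ that comes with the \bla-equivalence; it is an arbitrary homeomorphism, and shrinking the neighborhoods makes $X$ close to its tangent cone but does \emph{not} make this map close to the identification of the regular pieces coming from the combinatorial equivalence. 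So the two maps on the interface collar are not related by a ``bilipschitz-small'' discrepancy, and no collaring interpolation applies. The obstruction is topological: the conical part of $X_i$ is a $D^2$-bundle $\widetilde Y_i$ over $\widetilde C_i=\overline{C_i\setminus\widetilde W_i}$ fibered by curvettes, and the boundary map supplied by the local pieces extends to a bundle isomorphism $\widetilde Y_1\to\widetilde Y_2$ only if the relative Chern classes with respect to suitable trivializations over $\partial\widetilde C_i$ agree; a mismatch is a twisting phenomenon that cannot be removed by interpolation on a collar, however thin.

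The paper's proof supplies precisely what is missing here: it first uses the inner equivalence (Theorem~\ref{thm:inner}(i)) together with \cite[1.9 (2)]{BNP} to guarantee that the map carries the foliation by Milnor fibers of a generic linear form to the homotopy class of the corresponding foliation, takes these foliations as the boundary trivializations, and then computes the relative Chern classes to be minus the degrees of the components of $C_i$, which agree because $C_1$ and $C_2$ are combinatorially equivalent; only then does the extension over the conical region exist, and it is bilipschitz because it is a bundle map over a compact base. Your proposal has no substitute for this step, and your claim that the combinatorial equivalence ``prescribes'' compatible local identifications is exactly what has to be proved. A second, smaller omission: outer bilipschitz equivalence must be verified for pairs of points lying in different pieces (say near two different singular points of $C$, or one point deep in a local piece and one in the conical region). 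The paper handles these by noting that for such pairs inner and outer distances are comparable (this uses the reduced tangent cone, via the argument adapted from \cite[Section 4]{NPP}) and then invoking the inner equivalence; your piecewise construction only controls pairs within a single piece or a single collar.
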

 
\begin{proof}[Proof of Theorem \ref{th:main}] 
  Recall that a Zariski pair is a pair of projective curves $C_1,C_2
  \subset \mathbb P^2$ with the same combinatorial type but such that
  $(\mathbb P^2,C_1)$ is not homeomorphic to $(\mathbb P^2,C_2)$. The
  first example was discovered by Zariski: a pair of sextic curves
  $C_1$ and $C_2$, each with six cusps, distinguished by the fact that
  $C_1$ has the cusps lying on a quadric and $C_2$ does not. He
  constructed those of type $C_1$ in \cite{Z1} and conjectured type
  $C_2$, confirming their existence eight years later in \cite{Z2}. He
  distinguished their embedded topology by the fundamental groups of their 
  complements, but they can also be
  distinguished by their Alexander polynomials (Libgober
  \cite{libgober}) so they are called \emph{Alexander-Zariski pairs}.

  Let $(X_1,0)$ and $(X_2,0)$ be two SISs whose
  tangent cones are sextics of types $C_1$ and $C_2$ as above.
According to \cite{Z}, the analytic type
  of a cusp is uniquely determined, so its \bla-equivalence class is
  determined (Lemma \ref{le:RL implies BLA}). Then by Proposition \ref{thm:outer}, $(X_1,0)$ and $(X_2,0)$
  are outer Lipschitz equivalent.

  On the other hand, Artal showed that $(X_1,0)$ and $(X_2,0)$ do not
  have the same embedded topological type. In fact, he shows
  (\cite[Theorem 1.6 (ii)]{A}) that a Zariski pair is distinguished by
  its Alexander polynomials if and only if the corresponding
  SISs are distinguished by the Jordan block
  decompositions of their homological monodromies.
\end{proof}

\section{The inner geometry of a superisolated singularity}
\label{sec:superisolated}
 
We first recall how the topological type of a SIS is determined by the combinatorial type of its projectivized tangent cone.
We refer to 
\cite{ALM} for details.

A SIS $(X,0)\subset (\C^3,0)$ is resolved by
blowing up the origin of $(\C^3,0)$. The exceptional divisor of this
resolution of $(X,0)$ is the projectivized tangent cone $C$ of $(X,0)$
and one obtains the minimal good resolution by blowing up the
singularities of $C$ which are not ordinary double points until one
obtains a normal crossing divisor $C'$. Let $\Gamma$ be the dual graph
of this resolution. Following \cite{BNP} we say \L-curve for a
component of $C'$ which is a component of $C$ and \L-node any vertex
of $\Gamma$ representing an \L-curve. 

One can also resolve the singularities of $C$ as a projective
  plane curve to obtain the same graph $\Gamma$ except that the
  self-intersection numbers of the \L-curves are different (in the
  example below the self-intersection number $-9$ becomes $+3$). The
  graph $\Gamma$ with these data is equivalent to the combinatorial
  type of $C$.

\begin{example} \label{ex:resolution} Consider the SIS
  $(X,0)\subset (\C^3,0)$ given by $F(x,y,z)=y^3+xz^2-x^4=0$. Blowing
  up the origin of $\C^3$ resolves the singularity: using the chart
  $(x,v,w)\mapsto (x,y,z)= (x,xv,xw)$, the equation of the resolved
  $X^*$ is $v^3+w^2-x=0$ and the exceptional curve has a cusp
  singularity $x=v^3+w^2=0$. Blowing up further leads to the following dual
  graph $\Gamma$, the black vertex 
  being the \L-node.
\begin{center}
\begin{tikzpicture}
\draw[thin] (0,0)--(2,0);
\draw[thin] (1,0)--(1,-1); 
\draw[fill=white] (0,0) circle(2pt);
\draw[fill=white] (1,0) circle(2pt);
\draw[fill=white] (2,0) circle(2pt);
\draw[fill=black] (1,-1) circle(2pt);
\node(a)at(0,.3){$-2$};
\node(a)at(1,.3){$-1$};
\node(a)at(2,.3){$-3$};
\node(a)at(.58,-1){$-9$};
\end{tikzpicture}
\end{center}

The self-intersection $-9$ of the \L-curve  is computed as follows.  Let $E_1,\ldots,E_4$ be the components of the exceptional divisor indexed so that $E_1$ is the \L-curve and $E_2$, $E_3$ and $E_4$ correspond to the string of non \L-nodes indexed from left to right  on the graph. Since the tangent cone is reduced with degree $3$, the strict transform $l_1^*$ of a generic linear form $l_1 \colon (X,0) \to (\Bbb C,0)$ consists of three smooth curves transverse to $E_1$. The total transform $l_1$ is given by the divisor:
$$ (l_1) =  E_1 + 3 E_2 + 6 E_3 + 2E_4 + l_1^*\,.$$ 
Since $(l_1)$ is a principal divisor, we have $(l_1).E_1=0$,
which leads to $E_1.E_1 = -9$. 
\end{example}

\begin{proof}[Proof of Theorem \ref{thm:inner} (i)]
  Let $(X,0) \subset (\mathbb C^3,0)$ be a SIS with equation
  $f_d  + f_{d+1} = 0$.  We set $f=f_d$ and $g=-f_{d+1}$. 
    
  Let $\ell \colon \C^3 \to \C^2$ be a generic linear projection for
  $(X,0)$, let $\Pi$ be the polar curve of the restriction
  $\ell \mid_{X}$ and $\Delta = \ell(\Pi)$ its discriminant curve.
 
  Let $e$ be the blow-up of the origin of $\C^3$ and let $p$ be a singular point
  of $e^{-1}(0)\cap X^*$. Without loss of generality, we
  can assume $\ell = (x,y)$.  We can also choose our coordinates so
  that $p = (1,0,0) $ in the chart $(x,v,w)$ given by $(x,v,w)\mapsto
  (x,y,z)= (x,xv,xw)$ in the blow-up $e$ (so $p$ corresponds to the
  $x$-axis in the tangent cone of $X$). Then $X^*$ has equation
$$ f(1,v,w) - x g(1,v,w) = 0$$
and $g(1,v,w)$ is a unit at $p$ since
$\{g=0\} \cap Sing (f=0)=\emptyset$ in $\Bbb P^2$.

Let $e_0 \colon Y \to \C^2$ be the blow-up of the origin of $\C^2$. We
consider $e_0$ in the chart $(x,v) \mapsto (x,y)=(x,xv)$, we set
$q=(1,0) \in Y$ in this chart, and we denote by
$\tilde{\ell} \colon (X^*,p) \to (Y,q)$ the projection
$(x,v,w) \mapsto (x,v)$. So we have the commutative diagram:
 $$
\xymatrix{
  (X^*,p)\ar@{->}[r]^e\ar@{->}[d]^{\tilde \ell}&(X,0)\ar@{->}[d]^\ell \\
 (Y,q)\ar@{->}[r]^{e_0}&  (\C^2,0)\hbox to 0pt{\,.\hss} 
}
$$
Now $\Pi = X \cap \{f_z - g_z = 0\}$, so the strict transform   $\Pi^*$ of $\Pi$ by $e$ has equations:
$$f_w(1,v,w) - x g_w(1,v,w) =0 \quad\text{and}\quad f(1,v,w) -x g(1,v,w) =0\,,$$
which are also the equations of the polar curve of the projection
$\tilde{\ell} \colon (X^*,p) \to (Y,q)$.

Since $g(1,v,w) \in \C\{v,w\}$ is a unit at $p$, the quotient $h(v,w):= \frac{f(1,v,w)}{g(1,v,w)}$ defines a  holomorphic function germ  $h \colon (\C^2_{(v,w)},0) \to (\C,0)$. In terms of $h(v,w)$ the above equations for $(\Pi^*,p)$ can be written:
$$h_w(v,w)  =0 \quad\text{and}\quad h(v,w) - x =0\,.$$

Consider the isomorphism $ proj \colon (X^*,p) \to (\C^2,0)$ which is the
restriction of the linear projection $(x,v,w)\mapsto (v,w)$. Then
$\Pi^*$ is the inverse image by $proj$ of the polar curve $\Pi'$ of
the morphism $\ell' \colon (\C_{(v,w)}^2,0) \to (\C_{(x,v)}^2,0)$
defined by $(v,w) \mapsto (h(v,w),v)$, i.e., the relative  polar curve of the
map germ $(v,w) \mapsto h(v,w)$ for the generic projection $(v,w)
\mapsto v$.

We set
$\Delta' = \ell'(\Pi')$ and $q=(1,0)$ in $\C^2_{(x,v)}$.  We then have a  commutative diagram: 
 $$
\xymatrix{
 (\C^2,\Pi',0)\ar@{<-}[r]^{proj}\ar@{->}[dr]^{\ell'} & (X^*,\Pi^*,p)\ar@{->}[r]^e\ar@{->}[d]^{\tilde \ell}&(X,\Pi,0)\ar@{->}[d]^\ell \\
& (Y,\Delta',q)\ar@{->}[r]^{e_0}&  (\C^2,\Delta, 0) 
}
$$
 
Let $(\Pi_0,0)$ be the part of $(\Pi,0)$ which is tangent to the
$x-$axis (i.e., it corresponds to $p\in e^{-1}(0)$ in our chosen
coordinates) and let $(\Delta_0,0)$ be its image by $\ell$. Let $V$ be
a cone around the $x$-axis in $(\C^3,0)$. As in \cite{BNP}, consider a
carrousel decomposition of $(\ell(V),0)$ with respect to the curve
germ $(\Delta_0, 0)$ such that the $\Delta$-wedges around $\Delta_0$
are D-pieces. We then consider the geometric decomposition of $(V,0)$
into A-, B- and D-pieces obtained by lifting by $\ell$ this
decomposition. Lifting the carrousel decomposition of $\ell(V)$ by
$e_0$ we get a carrousel decomposition of $(Y,q)$ with respect to
$\Delta'$. On the other hand the lifting by $e$ of the geometric
decomposition of $V$ is a geometric decomposition of $(X^*,p)$ which
coincides with the lifting by $\tilde \ell$ of the carrousel
decomposition of $(Y,q)$ just defined.

By the L\^e Swing Lemma \cite[Lemma 2.4.7]{LMW}, the union of pieces
beyond the first Puiseux exponents of the branches of $\Delta'$ at $q$
lift to pieces in $X^*$ which have trivial topology, i.e., their links
are solid tori.  Therefore these are absorbed by the amalgamation
process consisting of amalgamating iteratively any D-piece which is
not a conical piece with the neighbor piece using \cite[Lemma 13.1]{BNP}.

Moreover, since $\Delta'$ is the strict transform of $\Delta$ by
$e_0$, the rate of each piece of the obtained decomposition of $X^*$
equals $q+1$, where $q$ is the first Puiseux exponent of a branch of
$\Delta'$.  Let $\Gamma_p$ be the minimal resolution graph of the
curve $h=0$ at $p$. Let us call a {\it node} of $\Gamma_p$ any vertex
having at least three incident edges including the arrows representing
the components of $h$ and the root vertex of $\Gamma_p$ if $h=0$ has
more than one line in its tangent cone.  According to
\cite[Th\'eor\`eme C]{LMW}, the rate $q$ equals the polar quotient
$$ \frac{m_{E_i}(l)}{m_{E_i}(h)}$$
where $v_i$ is the corresponding node in $\Gamma_p$ and where $l
\colon (\C_{v,w}^2,p) \to (\C,0)$ is a generic linear form at $p$.
  
Now, set $\tilde{f}(v,w)=f(1,v,w)$.  Since $g(1,v,w)$ is a unit at
$p$, the curves $h=0$ and $\tilde{f}=0$ coincide, so $m_{E_i}(h) =
m_{E_i}(\tilde{f})$.  Since the strict transform of $\tilde{f}$
coincides with the germ of $\cal L$-curves at $p$, $\Gamma_p$ is a
connected component of $\Gamma$ minus its \L-nodes with free edges
replaced by arrows.  Therefore the rates
$\frac{m_{E_i}(l)}{m_{E_i}(\tilde f)}$, and then the inner rate of
$(X, 0)$ are computed from $\Gamma$.
\end{proof}

\begin{example} \label{ex:inner geometry} Consider again the
  SIS $(X,0)$ of Example \ref{ex:resolution} with equation $
  xz^2+y^3-x^4=0$. Its projectivized tangent cone $xz^2+y^3=0$ has a unique singular
  point, and the corresponding graph $\Gamma_p$ is the resolution
  graph of the cusp $w^2+v^3=0$, i.e., the graph $\Gamma$ of Example
  \ref{ex:resolution} with the \L-node replaced by an arrow. The
  multiplicity of $\tilde f$ along the curve $E_3$ corresponding to
  the node of $\Gamma_p$ equals $6$ while that of a generic linear
  form $(v,w)\mapsto l(v,w)$ equals $2$. We then obtain the polar
  quotient $\frac{m_{E_3}(l)}{m_{E_3}(\tilde f)} = 1/3$, which gives
  inner rate $1/3 + 1 = 4/3$.
 
  The Lipschitz inner geometry is then completely described (see
  \cite[Section 15]{BNP}) by the graph $\Gamma$ completed by labeling
  its nodes by the inner rates of the corresponding geometric pieces:
 
 \begin{center}
\begin{tikzpicture}
\draw[thin] (0,0)--(2,0);
\draw[thin] (1,0)--(1,-1); 
\draw[fill=white] (0,0) circle(2pt);
\draw[fill=white] (1,0) circle(2pt);
\draw[fill=white] (2,0) circle(2pt);
\draw[fill=black] (1,-1) circle(2pt);
\node(a)at(0,.3){$-2$};
\node(a)at(1,.3){$-1$};
\node(a)at(2,.3){$-3$};
\node(a)at(.58,-1){$-9$};

\node(a)at(1,-1.3){$\mathbf 1$};

\node(a)at(1.4,-0.3){$\mathbf{4/3}$};

\end{tikzpicture}
\end{center}

\end{example}
  
\begin{example} \label{ex:inner geometry2} Consider the SIS $(X,0)$ with equation $ (zx^2+y^3)(x^3+zy^2)+z^7= 0$,
  that we already considered in \cite[Example 15.2]{BNP} and in
  \cite{NP2}. The tangent cone consists of two unicuspidal curves $C$
  and $C'$ with $6$ intersecting points $p_1, \ldots p_6$, the germ
  $(C \cup C',p_1)$ consisting of two transversal cusps, and the
  remaining 5 points being ordinary double points of $C \cup
  C'$.
   
  For each $i=1,\ldots,6$, the tangent cone of $(C \cup C',p_i)$ has
  two tangent lines and the quotient $m_{E_{v_0}}(l) / m_{E_{v_0}}(\tilde f)$
  at the root vertex $v_0$ of $\Gamma_{p_i}$ is then a polar quotient
  in the sense of \cite{LMW}. The root vertex $v_0$ has valency $2$
  and it corresponds to a special annular piece in the sense of
  \cite{BNP}, with inner rate $m_{E_{v_0}}(l) / m_{E_{v_0}}(\tilde f)
  +1$. For $p_2,\ldots,p_6$, we obtain inner rate $1/2 + 1 = 3/2$ for
  that special annular piece and for $p_1$, we obtain $1/4 + 1 =
  5/4$. The inner rates at the two other nodes of $\Gamma_{p_1}$ both
  equal $2/10 + 1 = 6/5$. We have thus recovered the inner geometry:
\begin{center}
\begin{tikzpicture}
  \draw[] (-2,0)circle(2pt);
  \draw[thin ](-2,0)--(-1,1);

  \draw[thin ](0:0)--(-1,1);
 
     \draw[thin ](0:0)--(1,1);
     
        \draw[thin ](1,1)--(2,0);
            \draw[] (2,0)circle(2pt);
   \draw[thin ](1,1)--(1.5,2.5);
    \draw[thin ](-1,1)--(-1.5,2.5);
    \draw[ fill] (1.5,2.5)circle(2pt);
     \draw[ fill] (-1.5,2.5)circle(2pt);
     \draw[thin ](-1.5,2.5)--(1.5,2.5);
     
\draw[fill=white] (2,0)circle(2pt);
\draw[fill=white] (-2,0)circle(2pt);
      
\draw[thin] (-1.5,2.5)..controls (-0.5,3) and (0.5,3)..(1.5,2.5);
\draw[thin] (-1.5,2.5)..controls (-0.5,2) and (0.5,2)..(1.5,2.5);
\draw[thin] (-1.5,2.5)..controls (-0.5,3.5) and (0.5,3.5)..(1.5,2.5);
\draw[thin] (-1.5,2.5)..controls (-0.5,1.5) and (0.5,1.5)..(1.5,2.5);

         \draw[fill=white] (0,2.5)circle(2pt);
           \draw[fill=white] (0,2.86)circle(2pt);
             \draw[fill=white] (0,2.12)circle(2pt);
               \draw[fill=white] (0,1.75)circle(2pt);
                 \draw[fill=white] (0,3.25)circle(2pt);
\draw[fill=white] (-1,1)circle(2pt);
\draw[fill=white] (1,1)circle(2pt);
\draw[fill=white] (0,0)circle(2pt);
     
\node(a)at(-2,-0.35){-2};
\node(a)at(0,-0.35){-5};
\node(a)at(2,-0.35){-2};
\node(a)at(-1,0.65){-1};
\node(a)at(1,0.65){-1};
\node(a)at(-0.3,3.5){-1};
\node(a)at(0.4,3.5){$\mathbf{3/2}$};
\node(a)at(-0.3,1.55){-1};
\node(a)at(0.4,1.55){$\mathbf{3/2}$};
\node(a)at(-1.7,2.8){$\mathbf{1}$};
\node(a)at(-1.7,2.2){-23};
\node(a)at(1.7,2.8){$\mathbf{1}$};

\node(a)at(1.7,2.2){-23};
\node(a)at(-1.5,1){$\mathbf{6/5}$};
\node(a)at(1.5,1){$\mathbf{6/5}$};
\node(a)at(0,0.4){$\mathbf{5/4}$};

 \draw[thin,>-stealth,->](1.5,2.5)--+(1.2,0.4);
       \draw[thin,>-stealth,->](1.5,2.5)--+(1.3,0);
         \draw[thin,>-stealth,->](1.5,2.5)--+(1.2,-0.4);
         
            \draw[thin,>-stealth,->](-1.5,2.5)--+(-1.2,0.4);
       \draw[thin,>-stealth,->](-1.5,2.5)--+(-1.3,0);
         \draw[thin,>-stealth,->](-1.5,2.5)--+(-1.2,-0.4);
\end{tikzpicture} 
\end{center} 
  This was also computed in \cite{BNP} with the help of Maple, in terms of
  the carrousel decomposition of the discriminant curve of a generic
  projection of $(X,0)$.
\end{example}

\begin{proof}[Proof of Theorem \ref{thm:inner} (ii)] 
 Consider the two SISs $(X_1,0)$ and $(X_2,0)$ with equations respectively:
 \begin{align*}
  X_1:& \qquad F_1(x,y,z)= (y^3-z^2x)(y^3 + z^2x) + (x+y+z)^7=0\\
 X_2:& \qquad F_2(x,y,z)=(y^3-z^2x)(y^3 + 2 z^2x) + (x+y+z)^7=0
 \end{align*}
 We will prove that they have same inner geometry and different outer geometries.
 
On one hand, the  projectivized tangent cones  of  $(X_1,0)$ and
$(X_2,0)$ have same  combinatorial type, so $(X_1,0)$ and $(X_2,0)$
have same Lipschitz inner geometry (Theorem \ref{thm:inner}). The
tangent cone consists of two unicuspidal components $C$ and $C'$ with
two intersection points: one, $p_1$,  at the cusps, with maximal
contact there, and one, $p_2$, at smooth points of $C$ and $C'$
intersecting with contact $3$ there.  The inner geometry is given by
the following graph. In particular, the    inner rates  at the two non
\L-nodes are computed from the corresponding  polar rates in the two
graphs $\Gamma_{p_1}$ and $\Gamma_{p_2}$. They both equal $1/6 + 1 = 7/6$. 

 \begin{center}
\begin{tikzpicture}
\draw[thin] (0,0)--(3,0);
\draw[thin] (-1,-1)--(0,0); 
\draw[thin] (-1,1)--(0,0); 
\draw[thin] (-1,1)--(-2,0); 
\draw[thin] (-1,-1)--(-2,0); 
\draw[thin] (-3,-1)--(-2,0); 
\draw[thin] (-3,1)--(-2,0); 

\draw[fill=white] (0,0) circle(2pt);
\draw[fill=white] (1.5,0) circle(2pt);
\draw[fill=white] (3,0) circle(2pt);
%\draw[fill=white] (1,-1) circle(2pt);

\draw[fill=black] (-1,-1) circle(2pt);
\draw[fill=black] (-1,1) circle(2pt);
\draw[fill=white] (-2,0) circle(2pt);

\draw[fill=white] (-3,-1) circle(2pt);
\draw[fill=white] (-3,1) circle(2pt);

\node(a)at(0,.3){$-1$};
\node(a)at(1.5,.3){$-2$};
\node(a)at(3,.3){$-2$};
 
\node(a)at(0.4,-0.3){$\mathbf{7/6}$};
\node(a)at(-1.4,-1.3){$-21$};
\node(a)at(-1.4,1.3){$-21$};
\node(a)at(-1,.7){$\mathbf{1}$};
\node(a)at(-1,-.7){$\mathbf{1}$};
\node(a)at(-2.5,0){$\mathbf{7/6}$};
\node(a)at(-2,0.3){$-1$};
\node(a)at(-3.4,-1){$-3$};
\node(a)at(-3.4,1){$-2$};

\end{tikzpicture}
\end{center}

On the other hand, let us compute the multiplicities of the three functions $x, y$ and $z$ at each component of the exceptional locus. We obtain the following  triples $(m_{E_j}(x), m_{E_j}(y), m_{E_j}(z))$ for both $X_1$ and $X_2$:  

 \begin{center}
\begin{tikzpicture}
\draw[thin] (0,0)--(3,0);
\draw[thin] (-1,-1)--(0,0); 
\draw[thin] (-1,1)--(0,0); 
\draw[thin] (-1,1)--(-2,0); 
\draw[thin] (-1,-1)--(-2,0); 
\draw[thin] (-3,-1)--(-2,0); 
\draw[thin] (-3,1)--(-2,0); 

\draw[fill=white] (0,0) circle(2pt);
\draw[fill=white] (1.5,0) circle(2pt);
\draw[fill=white] (3,0) circle(2pt);
%\draw[fill=white] (1,-1) circle(2pt);

\draw[fill=black] (-1,-1) circle(2pt);
\draw[fill=black] (-1,1) circle(2pt);
\draw[fill=white] (-2,0) circle(2pt);

\draw[fill=white] (-3,-1) circle(2pt);
\draw[fill=white] (-3,1) circle(2pt);

\node(a)at(3,.3){$(3,3,2)$};
\node(a)at(1.5,.3){$(6,5,4)$};
\node(a)at(0.5,-.3){$(9,7,6)$};

\node(a)at(-1,-1.3){$(1,1,1)$};
\node(a)at(-1,1.3){$(1,1,1)$};

\node(a)at(-3,0){$(12,14,15)$};

\node(a)at(-3.7,-1){$(4,5,5)$};
\node(a)at(-3.7,1){$(6,7,8)$};

\end{tikzpicture}
\end{center}

We compute from this the partial derivatives $\frac{\partial F_i}{\partial x}$, $\frac{\partial F_i}{\partial y}$ and $\frac{\partial F_i}{\partial z}$ along the curves of the exceptional divisor. We obtain different  values for two multiplicities (in bold) for $(X_1,0)$ and  $(X_2,0)$, written in that order on the graph:

\begin{center}
\begin{tikzpicture}
\draw[thin] (0,0)--(3,0);
\draw[thin] (-1,-1)--(0,0); 
\draw[thin] (-1,1)--(0,0); 
\draw[thin] (-1,1)--(-2,0); 
\draw[thin] (-1,-1)--(-2,0); 
\draw[thin] (-3,-1)--(-2,0); 
\draw[thin] (-3,1)--(-2,0); 

\draw[fill=white] (0,0) circle(2pt);
\draw[fill=white] (1.5,0) circle(2pt);
\draw[fill=white] (3,0) circle(2pt);
%\draw[fill=white] (1,-1) circle(2pt);

\draw[fill=black] (-1,-1) circle(2pt);
\draw[fill=black] (-1,1) circle(2pt);
\draw[fill=white] (-2,0) circle(2pt);

\draw[fill=white] (-3,-1) circle(2pt);
\draw[fill=white] (-3,1) circle(2pt);

\node(a)at(4,0){$(11,12,12)$};
\node(a)at(1.5,.3){$(22,24,24)$};
\node(a)at(0.8,-.3){$(33,35,36)$};

\node(a)at(-1,-1.3){$(5,5,5)$};
\node(a)at(-1,1.3){$(5,5,5)$};

\node(a)at(-3.5,0){$\small (72,\mathbf{70} \ or\  \mathbf{69},69)$};

\node(a)at(-4,-1){$(24,24,23)$};
\node(a)at(-4,1.3){ $\small (36,35, \mathbf{36}\ or \ \mathbf{35} )$};
%\node(a)at(-4,0.8){\small $(36,35,\mathbf{35})$};
\end{tikzpicture}
\end{center}

We compute from this  the resolution graph of the family of polar curves $a \frac{\partial F_i}{\partial x} + b \frac{\partial F_i}{\partial y} +c\frac{\partial F_i}{\partial z} =0$. In the $X_1$ case one has to blow up once more to resolve a basepoint. We then get  the resolution graph of the polar curve of a generic plane projection of $(X_1,0)$ resp.\ $(X_2,0)$ (the arrows represent the strict transform,  the numbers in parentheses are the multiplicities of the function $a \frac{\partial F_i}{\partial x} + b \frac{\partial F_i}{\partial y} +c\frac{\partial F_i}{\partial z}$ for generic $a,b,c$ and the negative numbers are self-intersections): 
\begin{center}
\begin{tikzpicture}
\draw[thin] (0,0)--(3,0);
\draw[thin] (-1,-1)--(0,0); 
\draw[thin] (-1,1)--(0,0); 
\draw[thin] (-1,1)--(-2,0); 
\draw[thin] (-1,-1)--(-2,0); 
\draw[thin] (-3,-1)--(-2,0); 
\draw[thin] (-4,2)--(-2,0); 

 \draw[thin,>-stealth,->](0,0)--+(0.7,1);
 \draw[thin,>-stealth,->](-3,1)--+(-1.3,-0.3);
 
 \draw[thin,>-stealth,->](-1,1)--+(0,1);
  \draw[thin,>-stealth,->](-1,1)--+(0.3,0.8);
  \draw[thin,>-stealth,->](-1,1)--+(-0.3,0.8);
  
   \draw[thin,>-stealth,->](-1,-1)--+(0,-1);
  \draw[thin,>-stealth,->](-1,-1)--+(0.3,-0.8);
  \draw[thin,>-stealth,->](-1,-1)--+(-0.3,-0.8);

\draw[fill=white] (0,0) circle(2pt);
\draw[fill=white] (1.5,0) circle(2pt);
\draw[fill=white] (3,0) circle(2pt);
%\draw[fill=white] (1,-1) circle(2pt);

\draw[fill=black] (-1,-1) circle(2pt);
\draw[fill=black] (-1,1) circle(2pt);
\draw[fill=white] (-2,0) circle(2pt);

\draw[fill=white] (-3,-1) circle(2pt);
\draw[fill=white] (-3,1) circle(2pt);
\draw[fill=white] (-4,2) circle(2pt);

\node(a)at(3.5,0){$(11)$};
\node(a)at(1.5,.3){$(22)$};
\node(a)at(0.4,-.3){$(33)$};

\node(a)at(-1.4,-1){$(5)$};
\node(a)at(-1.4,1){$(5)$};

\node(a)at(-2.6,0){$\small (69)$};

\node(a)at(-3.5,-1){$(23)$};
\node(a)at(-3.5,2.2){ $\small (35)$};
\node(a)at(-2.5,1.2){ $\small (105)$};
 \node(a)at(-6,2){ $(X_1,0)$};

{\small 

\node(a)at(-0.4,0){$-1$};
\node(a)at(1.5,-.3){$-2$};
\node(a)at(3,-.3){$-2$};

\node(a)at(-0.55,-1){$-21$};
\node(a)at(-0.55,1){$-21$};

\node(a)at(-1.6,0){$-2$};
\node(a)at(-3.2,0.7){$-1$};
\node(a)at(-4.4,2){$-3$};

\node(a)at(-3,-1.3){$-3$};
}
\end{tikzpicture}
\end{center}
\begin{center}
\begin{tikzpicture}

\draw[thin] (0,0)--(3,0);
\draw[thin] (-1,-1)--(0,0); 
\draw[thin] (-1,1)--(0,0); 
\draw[thin] (-1,1)--(-2,0); 
\draw[thin] (-1,-1)--(-2,0); 
\draw[thin] (-3,-1)--(-2,0); 
\draw[thin] (-3,1)--(-2,0); 

 \draw[thin,>-stealth,->](0,0)--+(0.7,1);
 \draw[thin,>-stealth,->](-3,1)--+(-1.3,-0.3);
  \draw[thin,>-stealth,->](-2,0)--+(-1.5,0);

\draw[thin,>-stealth,->](-1,1)--+(0,1);
  \draw[thin,>-stealth,->](-1,1)--+(0.3,0.8);
  \draw[thin,>-stealth,->](-1,1)--+(-0.3,0.8);
  
   \draw[thin,>-stealth,->](-1,-1)--+(0,-1);
  \draw[thin,>-stealth,->](-1,-1)--+(0.3,-0.8);
  \draw[thin,>-stealth,->](-1,-1)--+(-0.3,-0.8);

\draw[fill=white] (0,0) circle(2pt);
\draw[fill=white] (1.5,0) circle(2pt);
\draw[fill=white] (3,0) circle(2pt);
%\draw[fill=white] (1,-1) circle(2pt);

\draw[fill=black] (-1,-1) circle(2pt);
\draw[fill=black] (-1,1) circle(2pt);
\draw[fill=white] (-2,0) circle(2pt);

\draw[fill=white] (-3,-1) circle(2pt);
\draw[fill=white] (-3,1) circle(2pt);
%\draw[fill=white] (-4,2) circle(2pt);

\node(a)at(3.5,0){$(11)$};
\node(a)at(1.5,.3){$(22)$};
\node(a)at(0.4,-.3){$(33)$};

\node(a)at(-1.4,-1){$(5)$};
\node(a)at(-1.4,1){$(5)$};

\node(a)at(-1.5,0){$\small (69)$};

\node(a)at(-3.5,-1){$(23)$};
%\node(a)at(-3.5,2.2){ $\small (35)$};
\node(a)at(-2.5,1.2){ $\small (35)$};

{\small 

\node(a)at(-0.4,0){$-1$};
\node(a)at(1.5,-.3){$-2$};
\node(a)at(3,-.3){$-2$};

\node(a)at(-0.55,-1){$-21$};
\node(a)at(-0.55,1){$-21$};

\node(a)at(-2,-0.4){$-1$};
\node(a)at(-3.2,0.7){$-2$};
%\node(a)at(-4.4,2){$-3$};

\node(a)at(-3,-1.3){$-3$};
}

 \node(a)at(-6,1){ $(X_2,0)$};
\end{tikzpicture}
\end{center}
The polar curves of $(X_1,0)$ and $(X_2,0)$  have different Lipschitz geometry since they  don't even have the same number of components. Therefore, by  \cite[Theorem  1.2 (6)]{NP2}, $(X_1,0)$ and $(X_2,0)$ have different outer Lipschitz geometries. 
\end{proof}

\section{The outer geometry of a superisolated
  singularity}\label{sec:outer geometry} \begin{proof}[Proof of
    Lemma \ref{le:RL implies BLA}] We first re-formulate the
    definition of \bla-equivalence.  We will use coordinates $(v,w)$
    in $\C^2$ and $(x,y,z)$ in $\C^3$.  We have functions $h_1(v,w)$
    and $h_2(v,w)$ whose zero sets are the curves $(C_1,0)$ and
    $(C_2,0)$, a homeomorphism $\psi \colon (\C^2,0) \to (\C^2,0)$ of
    germs, a constant $K \geq 1$ and a neighborhood $\cal U$ of the origin
    in $\C^2$ such that for all $a,a' \in \cal U$.  \begin{align*}
      \frac{1}{K} || h_2(\psi(a)) (1,\psi(a)) -
      h_2(\psi(a'))&(1,\psi(a')) ||_{\C^3} \leq || h_1(a)(1,a) -
      h_1(a')(1,a') ||_{\C^3} \hskip1,5cm \\ & \leq K || h_2(\psi(a))
      (1,\psi(a)) - h_2(\psi(a')) (1,\psi(a')) ||_{\C^3} \end{align*}
    For $i=1,2$ we define $H_i\colon (\C^2,0) \to (\C^3,0)$ by $$
    H_i(v,w)=h_i(v,w)(1,v, w)$$ and denote by $(S_i,0)$ the image of
    $H_i$ in $(\C^3,0)$. Note that $H_i$ maps $(C_i,0)$ to $0$ and is
    otherwise injective. We can thus complete the maps $\psi$, $H_1$
    and $H_2$ to a commutative diagram
$$\xymatrix{(\C^2,0)\ar@{->}[r]^{H_1}\ar@{->}[d]^\psi
  &(S_1,0)\ar@{->}[d]^{\psi'\hbox to 0pt{\hbox to 3.5cm{}$(\star)$\hss}}\\
      (\C^2,0)\ar@{->}[r]^{H_2} &(S_2,0)\\
    }
$$
 and $\psi'$ is bijective. \Bla-equivalence is now the
    statement that $\psi'$ is bilipschitz for the outer geometry.  

Now write
    $h_1=Uh'_1$ and $H_1 =UH'_1$ where $U=U(v,w)\in \C\{v,w\}$
    is a unit. Then we obtain a commutative diagram
    $$\xymatrix{(\C^2,0)\ar@{->}[r]^{H'_1}\ar@{=}[d] &(S'_1,0)\ar@{->}[d]^{\eta}\\
      (\C^2,0)\ar@{->}[r]^{H_1} &(S_1,0)\\
    }$$ 
   where $\eta$ is $(x,y,z)\mapsto U(\frac yx,\frac
    zx)(x,y,z)$. The factor $U(\frac yx,\frac zx)=U(v,w)$ has the form 
    $\alpha_0+\sum_{i,j\ge 0}\alpha_{ih}v^iw^j$ with $\alpha_0\ne 0$
    so if the neighborhood $\cal U$ is small then the factor is close
    to  $\alpha_0$, so $\eta$ is bilipschitz. Thus $\psi'\circ \eta\colon
 (S'_1,0)\to (S_2,0)$ is bilipschitz, so we have shown that modifying $h_1$
    by a unit does not affect \bla-equivalence. The same holds
     for
    $h_2$, so \bla-equivalence does not depend on the  choice of
    defining functions for the curves $(C_1,0)$
    and $(C_2,0)$.

    It remains to show that analytic equivalence of $(C_1,0)$ and
    $(C_2,0)$ implies \bla-equivalence. Analytic equivalence means
    that there exists a biholomorphic germ $\psi\colon(\C^2,0)\to
    (\C^2,0)$ and a unit $U\in\C\{v,w\}$ such that $Uh_1=h_2\circ
    \psi$. We have already dealt with multiplication with a unit, so
    we will assume we have $h_1=h_2\circ \psi$. If $\psi$ is a linear
    change of coordinates, then we get a diagram as in $(\star)$
    above, with $\psi'$ given by the corresponding coordinate change
    in the $y,z$ coordinates of $\C^3$, so $\psi'$ is bilipschitz and
    we have \bla-equivalence. For general $\psi$ the same is true up to higher
    order in $v$ and $w$, so we still get \bla-equivalence.
  \end{proof}

\begin{proof}[Proof of Proposition \ref{thm:outer}] 
 Let $(X_1,0)$ and $(X_2,0)$ be two SISs with equations respectively 
$$f_1(x,y,z) - g_1(x,y,z) =0 \hbox{ and  }  f_2(x,y,z) - g_2(x,y,z) =0,$$
where for $i=1,2$, $f_i$ and $g_i$ are homogeneous polynomials of
degrees $d$ and $d+1$ respectively. We can assume that the projective
line $x=0$ does not contain any singular point of the projectivized
tangent cones $C_1= \{f_1=0\}$ and $C_2 = \{f_2=0\}$.  We assume also
that $C_1$ and $C_2$ have the same combinatorial types and that
corresponding singular points of $C_1$ and $C_2$ are \bla-equivalent.

Since the tangent cone of a SIS $(X,0)$ is
  reduced, the general hyperplane section of $(X,0)$ consists of
  smooth transversal lines. Therefore, adapting the arguments of
  \cite[Section 4]{NPP} by taking simply a line as test curve, we
  obtain that the inner and outer metrics are Lipschitz equivalent
  inside the conical part of $(X,0)$, i.e., outside cones around its
  exceptional lines.  So we just have to control outer distance
inside conical neighborhoods of the exceptional lines of $(X_1,0)$ and
$(X_2,0)$ whose projective points are corresponding singular points of
$C_1$ and $C_2$.

Let $p_1 \in Sing(C_1)$ and $p_2 \in Sing(C_2)$ be two singular points
in correspondence.  After modifying $(X_1,0)$ and $(X_2,0)$ by
analytic isomorphisms, we can assume that $p_i = (1,0,0)$ for
$i=1,2$. We use again the notations of the proof of Theorem
\ref{thm:inner}, and we work in the chart $(x,v,w) = (x,y/x,z/x)$ for
the blow-up $e$.
  
Set $h_i(v,w)=f_i(1,v,w)/ g_i (1,v,w)$. Then the germs $(X_i^*,p_i)$
have equations $h_i(v,w)+x=0$.
  
Since $C_1$ and $C_2$ are \bla-equivalent and $h_i=0$ is an equation of
$C_i$, there exists a local homeomorphism $\psi \colon
(\C^2_{(v,w)},0) \to (\C^2_{(v,w)},0)$, a constant $K \geq 1$ and a
neighborhood $U$ of the origin in $\C^2$ such that for all
$(v,w),(v',w') \in U$.
\begin{align*}
\frac{1}{K} ||  h_2(\psi(v,w)) (1,\psi(v,w)) -  h_2(\psi(v',w')) (1,\psi(v',w')) ||_{\C^3} \leq& \\
|| h_1(v,w)(1,v,w) - h_1(v',w')(1,v',w') &||_{\C^3} \leq&  (\ast)\\
  K  ||  h_2(\psi(v,w)) (1,\psi(v,w)) -  h_2(\psi(v',w')) (1,&\psi(v',w')) ||_{\C^3}
\end{align*}
         
Locally, 
$$X_1^* = \{ x=h_1(v,w)\}
\quad\text{and}\quad 
X_2^* = \{ x = h_2(\psi(v,w)) \}\,.$$
As in the proof of Theorem \ref{thm:inner} we consider
the isomorphisms $\proj_i\colon (X^*_i,p_i)\to (\C^2,0)$  for $i=1,2$, the
restrictions of the linear projections $(x,v,w)\mapsto (v,w)$.  The
composition $\proj_2^{-1}\circ\psi\circ\proj_1$ gives a local
homeomorphism $\psi' \colon (W_1,p_1) \to (W_2,p_2)$, where $W_i$ is
an open neighborhood of $p_i$ in $X_i^*$.  
%Set $V_i = e(W_i) \subset X_i$. 
Then, $\psi'$ induces a local homeomorphism $\psi'' \colon e(W_1) \to
e(W_2)$ such that $\psi'' \circ e = e \circ \psi'$. Notice that each
$e(W_i)$ contains the intersection of $X_i$ with a cone in $(\C^3,0)$
around the exceptional line represented by $p_i$.
   
 Consider a pair of points  $q=(x,xv,xw)$ and $q'=(x',x'v',x'w')$ in $e(W_1)$. By definition of $\psi''$, we have    
$$||q-q'|| =  || h_1(v,w)(1,v,w) - h_1(v',w')(1,v',w') ||_{\C^3}, $$
%and 
$$||\psi''(q)-\psi''(q')|| =||  h_2(\psi(v,w)) (1,\psi(v,w)) -  h_2(\psi(v',w')) (1,\psi(v',w')) ||_{\C^3}.$$
Then   $(\ast)$ implies that the 
ratio $\frac{||\psi''(q)-\psi''(q')||}{||q-q'||}$ is bounded above and below in a neighborhood of the origin. 

% Consider now the local homeomorphism
%   $\psi' \colon (W_1,p_1) \to (W_2,p_2)$ around each pair of related
%   singular points $p_1,p_2$ of $C_1,C_2$ as above, and
% for $i=1,2$, 
Now let  $\widetilde W_i$ be the union of the
  $W_i$'s
and let  $\psi' \colon \widetilde W_1 \to
  \widetilde W_2$ be the homeomorphism whose restriction to each
  $W_1$ is the local $\psi'$. Then $\psi''\colon e(\widetilde W_1)\to e(\widetilde W_2)$ is the outer bilipschitz homeomorphism induced by
  $\psi'$ and we must extend $\psi''$ over all of $X_1$.

  Let $B$ be a Milnor ball for $X_1$ and $X_2$ around $0$. We set
  $\widetilde{Y_i} = \overline{(e^{-1}(B \cap X_i)\setminus
    \widetilde{W_i}}$.  For $i=1,2$ we can adjust $\widetilde{W_i}$ so
  that $\widetilde{Y_i}$ is a $D^2$-bundle over the exceptional
  divisor $C_i$ minus its intersection with $\widetilde{W_i}$, i.e.,
  over $\widetilde C_i:=\overline{C_i \setminus \widetilde{W_i}}$, and
  whose fibers are curvettes of $C_i$. We want to extend $\psi''\colon e(\widetilde W_1)\to e(\widetilde W_2)$ to a bilipschitz map over
  the conical regions $e(\widetilde Y_1)$ and $e(\widetilde Y_2)$. For
  this it suffices to extend $\psi'$ by a bundle isomorphism 
  $\widetilde Y_1 \to \widetilde Y_2$, since the resulting
  $e(\widetilde Y_1) \to e(\widetilde Y_2)$ is then bilipschitz.

  $(X_1,0)$ and $(X_2,0)$ are inner bilipschitz equivalent by Theorem
  \ref{thm:inner}), so by \cite[1.9 (2)]{BNP} the image by $\psi'' $
  of the foliation of $e(\widetilde W_1)$ by Milnor fibers of a
  generic linear form $\ell_1$ has the homotopy class of the
  corresponding foliation by fibers of $\ell_2$ in $e(\widetilde
  W_2)$. Since the projectivized tangent cones $C_1$ and $C_2$ are
  reduced, a fiber of $\ell_i\circ e$ intersects each $D^2$-fiber over
  $\partial \widetilde C_i$ in one point. This gives a trivialization
  of the $D^2$-bundle over each $\partial \widetilde C_i$ and
  therefore determines a relative Chern class for each component of
  the bundle $\widetilde Y_i$ over $\widetilde C_i$. The map $\psi'$
  restricted to the bundle over $\partial \widetilde C_1$ extends to
  bundle isomorphisms between the components of $\widetilde Y_1$ and
  $\widetilde Y_2$ if and only if their relative Chern classes
  agree. But for $i=1,2$ these relative Chern classes are given by the
  negative of the number of intersection points of $\ell_i^*$ with
  each component of $C_i$ (i.e., the degrees of these components of
  $C_i$), and these degrees agree since $C_1$ and $C_2$ are
  combinatorially equivalent.

We have now constructed a map $\psi''\colon
  (X_1,0)\to (X_2,0)$ which is outer bilipschitz if we restrict to distance
  between pairs of points $x,y$ which are either both in a single
  component of $e(\widetilde W_1)$ or both in the conical region
  $e(\widetilde Y_1)$. Let $N\widetilde Y_i$ be a larger version of
  the bundle $\widetilde Y_i$, so $e(N\widetilde Y_i)$ is a conical
  neighborhood of $e(\widetilde Y_i)$. We still have an outer
  bilipschitz constant for $\psi''$ for any $x$ and $y$ which are both in a
  single component of $e(\widetilde W_1)$ or both in the conical region
  $e(N\widetilde Y_1)$. Otherwise, either one of $x,y$ is in $e(\widetilde
  W_1)\setminus e(N\widetilde Y_1)$ and the other in $e(\widetilde Y_1)$
  or $x$ and $y$ are in different components of $e(\widetilde W_1)$. The ratio of inner to outer distance is clearly bounded for such point pairs, so since $\psi''$ is inner bilipschitz, it is outer bilipschitz.
\end{proof}

\end{document}